\DeclareMathAlphabet{\cat}{OT1}{cmss}{m}{sl}
\newtheorem{theorem}{Theorem}[section]
\newtheorem{proposition}[theorem]{Proposition}
\newtheorem{lemma}[theorem]{Lemma}
\newtheorem{corollary}[theorem]{Corollary}
\theoremstyle{definition}
\newtheorem{remark}[theorem]{Remark}
\newtheorem{example}[theorem]{Example}
\newcommand{\tens}{\otimes}
\newcommand{\gmu}{\boldsymbol{\mu}}
\newcommand{\ad}{\operatorname{ad}}
\newcommand{\ch}{\operatorname{char}}
\newcommand{\disc}{\operatorname{disc}}
\newcommand{\Br}{\operatorname{Br}}
\newcommand{\Sym}{\operatorname{Sym}}
\newcommand{\gPGL}{\operatorname{\mathbf{PGL}}}
\newcommand{\gO}{\operatorname{\mathbf{O}}}
\newcommand{\gSp}{\operatorname{\mathbf{Sp}}}
\newcommand{\gm}{\operatorname{\mathbb{G}}_m}
\newcommand{\gPGU}{\operatorname{\mathbf{PGU}}}
\newcommand{\gPGO}{\operatorname{\mathbf{PGO}}}
\newcommand{\gPGSp}{\operatorname{\mathbf{PGSp}}}
\newcommand{\Image}{\operatorname{Im}}
\newcommand{\identity}{\operatorname{Id}}
\newcommand{\ed}{\operatorname{ed}}
\newcommand{\QH}{\operatorname{QH}}
\newcommand{\A}{\mathbb{A}}
\newcommand{\cT}{\mathcal T}
\newcommand{\cS}{\mathcal S}
\title[Essential dimension of projective orthogonal, symplectic groups] 
{Essential dimension of projective orthogonal and symplectic groups of small degree}
\author
[S. Baek] {Sanghoon Baek}
\address
{Department of Mathematics and Statistics, University of Ottawa, 585 King
Edward, Ottawa, ON K1N6N5, Canada}
\email {sbaek@uottawa.ca}
\thanks{The work has been partially supported from the Fields Institute and from Zainoulline's NSERC Discovery grant 385795-2010.}
\begin{document}
\begin{abstract}
In this paper, we study the essential dimension of classes of central simple algebras with involutions of index less or equal to $4$. Using structural theorems for simple algebras with involutions, we obtain the essential dimension of projective and symplectic groups of small degree.

\end{abstract}
\maketitle


\section{Introduction}

Let $F$ be a field, $A$ a central simple $F$-algebra, and $(\sigma,f)$ a quadratic pair on $A$ (see \cite[5.B]{Book}). A morphism of algebras with quadratic pair $\phi:(A,\sigma,f)\to (A',\sigma',f')$ is an $F$-algebra morphism $\phi:A\to A'$ such that $\sigma'\circ \phi=\phi\circ \sigma$ and $f\circ \phi=f'$. For any field extension $K/F$, we write $(A,\sigma,f)_{K}$ for $(A\tens_{F}K, \sigma\tens \identity_{K}, f_{K})$, where $f_{K}:\Sym(A_{K},\sigma_{K})\to K$.

For $n\geq 2$, let $D_{n}$ denote the category of central simple $F$-algebras of degree $2n$ with quadratic pair, where the morphisms are the $F$-algebra homomorphisms which preserve the quadratic pairs and let $A_{1}^{2}$ denote the category of quaternion algebras over an \'etale quadratic extension of $F$, where the morphisms are the $F$-algebra isomorphism. Then, there is an equivalence of groupoids:
\begin{equation}\label{DA}
D_{2}\equiv A_{1}^{2};
\end{equation}
see \cite[Theorem 15.7]{Book}.

Moreover, if we consider the full subgroupoid $^{1}\!A_{1}^{2}$ of $A_{1}^{2}$ whose objects are $F$-algebras of the form $Q\times Q'$, where $Q$ and $Q'$ are quaternion algebras over $F$, and the full subgroupoid $^{1}D_{n}$ of $D_{n}$ whose objects are central simple algebras over $F$ with quadratic pair of trivial discriminant, then the equivalence in (\ref{DA}) specialize to the following equivalence of subgroupoids:
\begin{equation}\label{DATrivial}
^{1}D_{2}\equiv ^{1}\!\!\!A_{1}^{2};
\end{equation}
see \cite[Corollary 15.12]{Book}.

For $n\geq 1$, we denote by $C_{n}$ be the category of central simple $F$-algebras of degree $2n$ with symplectic involution, where the morphisms are the $F$-algebra isomorphism which preserve the involutions.

By Galois cohomology, there are canonical bijections (see \cite[\S 29.D,F]{Book})
\begin{equation}\label{DAGalois}
D_{n}\longleftrightarrow H^{1}(F,\gPGO_{2n})
\end{equation}
and
\begin{equation}\label{CAGalois}
C_{n}\longleftrightarrow H^{1}(F,\gPGSp_{2n}).
\end{equation}

Let $\cT:\cat{Fields}/F\to\cat{Sets}$ be a functor from the category $\cat{Fields}/F$ of field extensions over $F$ to the category $\cat{Sets}$ of sets and let $p$ be a prime. We denote by $\ed(\cT)$ and $\ed_{p}(\cT)$ the essential dimension and essential p-dimension of $\cT$, respectively. We refer to \cite[Def. 1.2]{BerhuyFavi03} and \cite[Sec.1]{Merkurjev09} for their definitions. Let $G$ be an algebraic group over $F$. The \emph{essential dimension $\ed(G)$} (respectively, \emph{essential $p$-dimension $\ed_{p}(G)$}) of $G$ is defined to be $\ed(H^1(-,G))$ (respectively, $\ed_{p}(H^1(-,G))$), where $H^1(E,G)$ is the nonabelian cohomology set with respect to the finitely generated faithfully flat topology (equivalently, the set of isomorphism classes of $G$-torsors) over a field extension $E$ of $F$.

A morphism $\cS\to \cT$ from $\cat{Fields}/F$ to $\cat{Sets}$ is called \emph{$p$-surjective} if for any $E\in \cat{Fields}/F$ and any $\alpha\in \cT(E)$, there is a finite field extension $L/E$ of degree prime to $p$ such that $\alpha_{L}\in\Image(\cS(L)\to \cT(L))$. A morphism of functors $\cS\to \cT$ from $\cat{Fields}/F$ to $\cat{Sets}$ is called \emph{surjective} if for any $E\in \cat{Fields}/F$, $\cS(E)\to \cT(E)$ is surjective. Obviously, any surjective morphism is $p$-surjective for any prime $p$. Such a surjective morphism gives an upper bound for the essential ($p$)-dimension of $\cT$ and a lower bound for the essential ($p$)-dimension of $\cS$,
\begin{equation}\label{ctcssssssssss}
\ed(\cT)\leq \ed(\cS) \text{ and } \ed_{p}(\cT)\leq \ed_{p}(\cS);
\end{equation}
see \cite[Lemma 1.9]{BerhuyFavi03} and \cite[Proposition 1.3]{Merkurjev09}.

\begin{example}\label{pgsp2}
Let $(M_{2}(F),\gamma)\in C_{1}$, where $\gamma$ is the canonical involution on $M_{2}(F)$. As $(M_{2}(K),\gamma_{K})\simeq (M_{2}(F),\gamma)\tens_{F}K$ for any field extension $K/F$, we have $\ed((M_{2}(F),\gamma))=0$.

Assume that $\ch(F)\neq 2$. The exact sequence \[1\to \gmu_{2}\to \gSp_{2}\to \gPGSp_{2}\to 1\] induces the connecting morphism $\partial:H^{1}(-,\gPGSp_{2})\to \Br_{2}(-)$ which sends a pair $(Q,\gamma)$ of a quaternion algebra with canonical involution to the Brauer class $[Q]$. As this morphism is nontrivial, by \cite[Corollary 3.6]{BerhuyFavi03} we have $\ed(\gPGSp_{2})\geq 2$ (or by Lemma \ref{forgetfulpgo}). Consider the morphism $\gm^{2}\to C_{1}$ defined by $(x,y)\mapsto ((x,y),\gamma)$, where $\gamma$ is the canonical involution. As this morphism is surjective, by (\ref{ctcssssssssss}) we have $\ed(C_{1})\leq 2$, thus $\ed(\gPGSp_{2})=2$. This can be recovered from the exceptional isomorphism $\gPGSp_{2}\simeq \gO^{+}_{3}$.
\end{example}

\emph{Acknowledgements}: I am grateful to A.~Merkurjev for useful discussions. I am also grateful to J.~P. Tignol and S. Garibaldi for helpful comments.

\section{Essential dimension of projective orthogonal and symplectic groups associated to central simple algebras of index $\leq 4$}

First, we compute upper bounds for the essential dimension of certain classes of simple algebras with involutions of index less or equal to $2$.

Let $(Q,\gamma)$ be a pair of a quaternion over a field $F$ and the canonical involution. For any field extension $K/F$ and any integer $n\geq 3$, we write $\QH_{n}^{+}(K)$ (respectively, $\QH_{n}^{-}(K)$) for the set of isomorphism classes of $(M_n(Q),\sigma_{h})$, where $\sigma_{h}$ is the adjoint involution on $M_n(Q)$ with respect to a hermitian form (respectively, skew-hermitian form) $h$ (with respect to $\gamma$). If $n$ is odd, we write $^{1}\!\QH_{n}^{-}(K)$ for the set of isomorphism classes of $(M_n(Q),\sigma_{h})$, where $\sigma_{h}$ is the adjoint involution on $M_n(Q)$ with respect to a skew-hermitian form $h$ with $\disc(\sigma_{h})=1$.

\begin{lemma}\label{quaternionichermitian}
Let $F$ be a field and $n\geq 3$ any integer. Then
\begin{enumerate}
\item $\ed(\QH_{n}^{+})\leq n+1$.
\smallskip
\item $\ed(\QH_{n}^{-})\leq 3n-3$ if $\ch(F)\neq 2$.
\smallskip
\item $\ed($$^{1}\!\QH_{n}^{-})\leq 3n-4$ if $\ch(F)\neq 2$.
\end{enumerate}
\end{lemma}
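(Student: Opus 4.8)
The plan is to obtain all three bounds in the same way: by exhibiting a surjective morphism of functors onto $\QH_n^{+}$, $\QH_n^{-}$, respectively $^{1}\!\QH_n^{-}$, from a functor of the shape $\gm^{r}\times\A^{s}$, and then invoking (\ref{ctcssssssssss}). Indeed a surjection $\cS\to\cT$ gives $\ed(\cT)\le\ed(\cS)$, and for $\cS=\gm^{r}\times\A^{s}$ one has $\ed(\cS)\le r+s$. Thus in each case the work is to (i) put the relevant (skew-)hermitian form into a normal form depending on as few parameters as possible, and (ii) check that the resulting morphism is genuinely surjective over every field extension $K/F$.

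For (1), I would use that the diagonal entries of a hermitian form with respect to the canonical symplectic involution $\gamma$ lie in $\Sym(Q,\gamma)=F$ (here $\dim_{F}\Sym(Q,\gamma)=1$ in every characteristic), so such a form diagonalizes as $\langle a_1,\dots,a_n\rangle$ with $a_i\in F^{\times}$. Since scaling the whole form by $\lambda\in F^{\times}$ leaves the adjoint involution unchanged, I may normalize $a_1=1$. Writing $Q=(a,b)$, this produces the morphism $\gm^2\times\gm^{n-1}\to\QH_n^{+}$, $(a,b;a_2,\dots,a_n)\mapsto\bigl(M_n((a,b)),\sigma_{\langle 1,a_2,\dots,a_n\rangle}\bigr)$, surjective by the diagonalization and scaling just described; its source has dimension $n+1$, which gives (1).

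For (2) (now $\ch(F)\neq 2$) a skew-hermitian form diagonalizes as $\langle q_1,\dots,q_n\rangle$ with each $q_i$ a nonzero pure quaternion (so $\gamma(q_i)=-q_i$ and $q_i^2\in F^{\times}$), and the adjoint involution $\sigma_h$ is orthogonal. The key normalization is to use the first two entries to fix a presentation of $Q$: I take $q_1$ itself as the first standard generator $i$, so $a=i^2=q_1^2$; decomposing $q_2=si+w$ with $w$ the component of $q_2$ orthogonal to $i$ (a pure quaternion anticommuting with $i$), I take $j:=w$ as the second generator, so $b=j^2$ and $q_2=si+j$. The remaining $q_3,\dots,q_n$ are then arbitrary pure quaternions of $Q=(a,b)$. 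This yields a surjective morphism from the space with coordinates $a,b,s$ together with the $3(n-2)$ coordinates of $q_3,\dots,q_n$, of dimension $3+3(n-2)=3n-3$, onto $\QH_n^{-}$. Note that no extra scaling step is needed: the value $1$ of the $j$-coefficient of $q_2$ is \emph{forced} by the choice $j=w$, which is exactly what brings the count down from $3n-2$ to $3n-3$.

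For (3) I would start from the $(3n-3)$-parameter family of (2) and impose $\disc(\sigma_h)=1$. The discriminant of the orthogonal involution adjoint to $\langle q_1,\dots,q_n\rangle$ is, up to a universal sign and a power of $\disc(Q)$, the product $\prod_{i}\Nrd(q_i)\bmod F^{\times 2}$; since $n$ is odd this is a nontrivial condition fixing the square class of $\Nrd(q_n)$ in terms of the remaining parameters. Prescribing the value of $\Nrd(q_n)$ confines the last entry to a two-dimensional quadric inside the three-dimensional space of pure quaternions, so the last entry costs $2$ rather than $3$ parameters, and the source drops to $(3n-3)-1=3n-4$, giving (3). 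The routine parts are the diagonalizations and the dimension bookkeeping; the two points needing real care are, first, surjectivity in (2)—that $q_1=i$, $q_2=si+j$ is always attainable, which is clear when $q_1,q_2$ are independent modulo $F$ and requires absorbing the lower-dimensional locus $q_2\in Fq_1$ (e.g. by reordering the diagonal) without enlarging the source—and, second, the most delicate step, pinning down the exact discriminant formula for $\sigma_h$ and checking that for $n$ odd the condition $\disc(\sigma_h)=1$ is equivalent to prescribing the square class of $\Nrd(q_n)$ and so cuts the dimension by exactly one. The parity of $n$ enters precisely through the sign $(-1)^{n}$ and the power of $\disc(Q)$ in that formula, which is why the trivial-discriminant refinement is stated only for odd $n$.
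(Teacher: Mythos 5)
Your treatment of (1) and (2) is essentially the paper's own argument: diagonalize the hermitian form with entries in $F$ and scale the first entry to $1$ (giving $n+1$ parameters), and in the skew-hermitian case use $q_1$ and the component of $q_2$ anticommuting with $q_1$ as the two standard generators of $Q$, so that $q_2$ costs one parameter and each remaining entry costs three (your $s$ is the paper's $c$). Two small remarks there: in characteristic $2$ it is $\operatorname{Symd}(Q,\gamma)$, not $\Sym(Q,\gamma)$, that equals $F$ (the latter is $3$-dimensional), although the conclusion that the diagonal entries may be taken in $F$ is exactly what the paper uses; and the degenerate locus $q_2\in Fq_1$ that you flag is passed over silently in the paper as well.

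There is, however, a genuine gap in (3). The condition $\disc(\sigma_h)=1$ prescribes only the \emph{square class} of $\Nrd(q_n)$ in terms of the other parameters, and prescribing a square class does not cut the dimension of a parameter variety: the set of pure quaternions whose reduced norm lies in a fixed square class is Zariski dense in the three-dimensional space of pure quaternions, not a surface. To drop to $3n-4$ you must impose an exact polynomial identity, say $\Nrd(q_n)=G(a,b,s,q_3,\dots,q_{n-1})$, and surjectivity then requires showing that every class with trivial discriminant has a representative satisfying this identity on the nose, not merely up to squares. The only modification available that preserves $\sigma_h$ is scaling the whole form $h$ by $\lambda\in K^{\times}$, which multiplies every $\Nrd(q_i)$ by $\lambda^{2}$; if $G$ is a monomial in the $\Nrd(q_i)$ for $i<n$, the ratio $\Nrd(q_n)/G$ transforms by $\lambda^{2k}$ for some integer $k$, and a given square $u^{2}$ can be removed over an arbitrary field only when $k=\pm1$ (the naive choice $G=\pm\prod_{i<n}\Nrd(q_i)$ gives $k=-(n-2)$ and would require extracting an $(n-2)$-th root of $u$). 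This is precisely why the paper writes the discriminant condition in the balanced form $-p^{2}q^{2}r_{1}^{2}\cdots r_{m-1}^{2}=r_{m}^{2}\cdots r_{n-2}^{2}$, with $m+1$ factors on the left and $m$ on the right for $n=2m+1$: global scaling then multiplies the ratio of the two sides by exactly $\lambda^{2}$, and since triviality of the discriminant makes that ratio a square, a suitable $\lambda$ achieves the exact identity. Your sketch correctly isolates the discriminant computation as the delicate point, but it does not contain this balancing device (or any substitute), and without it the proposed $(3n-4)$-dimensional variety is not a classifying variety for $^{1}\!\QH_{n}^{-}$.
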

\begin{proof}
$(1)$ If $h$ is a hermitian form, then $h=\langle t_{1},t_{2},\cdots, t_{n}\rangle$ for some $t_{i}\in F$. We consider the affine variety
\[X=
\begin{cases}
\gm^{2}\times \A_{F}^{n-1} &\text{ if $\ch(F)\neq 2$},\\
\gm\times \A_{F}^{n} &\text{ if $\ch(F)= 2$},\\
\end{cases}
\]
and define a morphism $X(K)\to \QH_{n}^{+}(K)$ by
\[
(a,b,t_{1},\cdots,t_{n-1})\mapsto
\begin{cases}
((a,b)\tens M_{n}(K), \sigma_{\langle 1,t_{1},\cdots, t_{n-1}\rangle}) &\text{ if $\ch(F)\neq 2$},\\
([a,b)\tens M_{n}(K), \sigma_{\langle 1,t_{1},\cdots, t_{n-1}\rangle}),&\text{ if $\ch(F)= 2$}.\\
\end{cases}
\]
As a scalar multiplication does not change the adjoint involution, this morphism is surjective. Therefore, by (\ref{ctcssssssssss}), we have $\ed(\QH_{n}^{+})\leq n+1$.

$(2)$ From now we assume that $\ch(F)\neq 2$. If $h$ is a skew-hermitian form, then $h=\langle q_{1},q_{2},\cdots, q_{n}\rangle$ for some pure quaternions $q_{i}\in Q$. We consider the affine variety $Y=\gm^{2}\times \A^{1}\times \A^{3(n-2)}$ with coordinates $(a,b,c,t_{1},\cdots, t_{3n-6})$ and the conditions $ac^{2}+b\neq 0$, $at_{1+3(k-1)}^{2}+bt_{2+3(k-1)}^{2}-abt_{3k}^{2}\neq0$ for all $1\leq k\leq n-2$. Define a morphism $\phi_{K}: Y(K)\to \QH_{n}^{-}(K)$ by
\[(a,b,c,t_{1},\cdots,t_{3n-6})\mapsto ((a,b)\tens M_{n}(K), \sigma_h),\]
where $p=i$, $q=ci+j$, $r_{k}=t_{1+3(k-1)}i+t_{2+3(k-1)}j+t_{3k}ij$ for $1\leq k\leq n-2$, and $h=\langle p,q,r_{1},\cdots, r_{3n-2}\rangle$.

We show that $Y$ is a classifying variety for $\QH_{n}^{-}$. Suppose that we are given a quaternion $K$-algebra $Q=(a,b)$ and a skew hermitian form $h=\langle p,q, r_{1},\cdots, r_{n-2}\rangle$ for some pure quaternions $p,q,r_{k}$. We can find a scalar $c\in K$ such that $p$ and $q-cp$ anticommute, thus we have $Q\simeq (p^{2}, (q-cp)^{2})$. For $1\leq k\leq n-2$, let
\begin{equation}\label{rkkk}
r_{k}=t_{1+3(k-1)}p+t_{2+3(k-1)}(q-cp)+t_{3k}p(q-cp)
\end{equation}
with $t_{1},t_{2},\cdots, t_{3n-6}\in K$. Then $(M_{n}(Q),\sigma_{h})\simeq (M_{n}((p^{2}, (q-cp)^{2})),\sigma_{h})$ is the image of $\phi_{K}$. Therefore, by (\ref{ctcssssssssss}), we have $\ed(\QH_{n}^{-})\leq 3n-3$.

\medskip

$(3)$ Assume that $n=2m+1$ for $m\geq 1$. We consider the variety $Y$ in $(2)$ with an additional condition
\[-a(ac^{2}+b)\prod_{k=1}^{m-1}at_{1+3(k-1)}^{2}+bt_{2+3(k-1)}^{2}-abt_{3k}^{2}=\prod_{k=m}^{n-2}at_{1+3(k-1)}^{2}+bt_{2+3(k-1)}^{2}-abt_{3k}^{2}.\]
We show that this variety with the same morphism $\phi_{K}$ in $(2)$ is a classifying variety for $^{1}\!\QH_{n}^{-}$. Suppose that we are given a quaternion $K$-algebra $Q=(a,b)$ and a skew hermitian form $h=\langle p,q, r_{1},\cdots, r_{n-2}\rangle$ for some pure quaternions $p,q,r_{k}$. We do the same procedure as in $(1)$, so that we have $(M_{n}(Q),\sigma_{h})\simeq (M_{n}((p^{2}, (q-cp)^{2})),\sigma_{h})$ and (\ref{rkkk}). As $\disc(\sigma_{h})=1$, there is a scalar $d\in K^{\times}$ such that
\[-p^{2}q^{2}r_{1}^{2}\cdots r_{m-1}^{2}=(\frac{d}{r_{m}^{2}\cdots r_{n-2}^{2}})^{2}r_{m}^{2}\cdots r_{n-2}^{2}.\]
We set $f=d/{r_{m}^{2}\cdots r_{n-2}^{2}}$. As a scalar multiplication does not change the adjoint involution, we can modify $h$ by the scalar $f$. As $(p^{2}, (q-cp)^{2})\simeq (f^{2}p^{2}, f^{2}(q-cp)^{2})$, $(M_{n}(Q),\sigma_{h})$ is the image of $\phi_{K}$. Therefore, by (\ref{ctcssssssssss}), we have $\ed($$^{1}\!\QH_{n}^{-})\leq 3n-4$.

\end{proof}
\begin{remark}
The main idea of the proof of the case where $h$ is a skew-hermitian form is from Merkurjev's work on algebras of degree $4$ in his private note.
\end{remark}

Assume that $n$ is odd. Then we have \[\QH_{n}^{+}=C_{n}, \QH_{n}^{-}=D_{n}, \text{ and } ^{1}\!\QH_{n}^{-}=^{1}\!D_{n}.\] Hence, by \cite[Theorem 4.2]{Book} and the exceptional isomorphism $\gPGO_{6}\simeq \gPGU_{4}$, we have
\begin{corollary}
Assume that $n\geq 3$ is odd. Then
\begin{enumerate}
\item $\ed(\gPGSp_{2n})\leq n+1$.
\item $\ed(\gPGO_{2n})\leq 3n-3$ if $\ch(F)\neq 2$. In particular, $\ed(\gPGU_{4})\leq 6$.
\item $\ed(\gPGO^{+}_{2n})\leq 3n-4$ if $\ch(F)\neq 2$.
\end{enumerate}
\end{corollary}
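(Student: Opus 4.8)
The plan is to derive all three bounds from Lemma \ref{quaternionichermitian} by passing through the cohomological dictionary recorded in the introduction. The formal part comes first: by (\ref{DAGalois}) and (\ref{CAGalois}) the groupoid $C_n$ is classified by $\gPGSp_{2n}$ and $D_n$ by $\gPGO_{2n}$, so that $\ed(\gPGSp_{2n})=\ed(C_n)$ and $\ed(\gPGO_{2n})=\ed(D_n)$; likewise, by the standard classification of algebras with quadratic pair of trivial discriminant (see \cite[\S29]{Book}), the subgroupoid ${}^1D_n$ is classified by $\gPGO^{+}_{2n}$, giving $\ed(\gPGO^{+}_{2n})=\ed({}^1D_n)$. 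Thus everything reduces to the identifications $\QH_n^{+}=C_n$, $\QH_n^{-}=D_n$, and ${}^1\QH_n^{-}={}^1D_n$ for $n$ odd, after which the numbers are read off directly from the lemma.

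I expect the identifications to be the crux, and this is exactly where the oddness of $n$ enters. Starting from $(A,\sigma)$ with $\sigma$ of the first kind, we have $A\simeq A^{\op}$ via $\sigma$, so the exponent of $A$ divides $2$ and hence $\ind A$ is a power of $2$; since it also divides $\deg A=2n$ and $n$ is odd, necessarily $\ind A\le 2$, so $A\simeq M_n(Q)$ for a quaternion algebra $Q$ (split when $A$ is split). The adjoint correspondence of \cite[Theorem 4.2]{Book} then writes $\sigma=\sigma_h$ for a hermitian or skew-hermitian form $h$ on $Q^n$ relative to the canonical involution $\gamma$; and because $\gamma$ is symplectic, the same result shows $\sigma_h$ is symplectic exactly when $h$ is hermitian and orthogonal exactly when $h$ is skew-hermitian. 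This yields $\QH_n^{+}=C_n$, and, using $\ch(F)\neq 2$ to identify orthogonal involutions with quadratic pairs, $\QH_n^{-}=D_n$; matching the condition $\disc(\sigma_h)=1$ with trivial discriminant gives ${}^1\QH_n^{-}={}^1D_n$. The care needed here is purely in tracking the symplectic-versus-orthogonal type across the Morita/adjoint dictionary and in keeping the split algebra inside the same family.

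With the identifications established, the conclusion is immediate: part (1) gives $\ed(\gPGSp_{2n})=\ed(\QH_n^{+})\le n+1$, part (2) gives $\ed(\gPGO_{2n})=\ed(\QH_n^{-})\le 3n-3$ for $\ch(F)\neq 2$, and part (3) gives $\ed(\gPGO^{+}_{2n})=\ed({}^1\QH_n^{-})\le 3n-4$ for $\ch(F)\neq 2$. For the final assertion I would specialize part (2) to $n=3$, where $2n=6$ and $3n-3=6$, so that $\ed(\gPGO_6)\le 6$, and then transport this bound along the exceptional isomorphism $\gPGO_6\simeq\gPGU_4$ to obtain $\ed(\gPGU_4)\le 6$.
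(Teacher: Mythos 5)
Your proposal is correct and follows essentially the same route as the paper: the paper deduces the corollary from the identifications $\QH_{n}^{+}=C_{n}$, $\QH_{n}^{-}=D_{n}$, ${}^{1}\QH_{n}^{-}={}^{1}D_{n}$ for odd $n$ together with Lemma \ref{quaternionichermitian}, \cite[Theorem 4.2]{Book}, and the exceptional isomorphism $\gPGO_{6}\simeq\gPGU_{4}$. You merely spell out the details the paper leaves implicit (the exponent-$2$/odd-degree argument forcing $\ind A\leq 2$, and the type-tracking through the adjoint correspondence), which is a faithful expansion rather than a different proof.
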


\begin{remark}
In fact, $\ed_{2}(\gPGSp_{2n})=\ed(\gPGSp_{2n})=n+1$ for $n\geq 3$ odd and $\ch(F)\neq 2$. The lower bound was obtained by Chernousov and Serre in \cite[Theorem 1]{CS} and the exact value was obtained by Macdonald in \cite[Proposition 5.1]{Mac}.
\end{remark}

\begin{lemma}\label{lempgl2}\cite[Section 2.6]{BM09}
Let $F$ be a field of characteristic different from $2$. Then
\[\ed_{2}(\gPGL_{2}^{\times n})=\ed(\gPGL_{2}^{\times n})=2n.\]
\end{lemma}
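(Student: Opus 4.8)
The plan is to prove the chain $2n \leq \ed_2(\gPGL_{2}^{\times n}) \leq \ed(\gPGL_{2}^{\times n}) \leq 2n$, where the middle inequality $\ed_2 \leq \ed$ is automatic. Thus it suffices to produce an upper bound $\ed(\gPGL_{2}^{\times n}) \leq 2n$ by exhibiting a classifying variety, and a matching lower bound $\ed_2(\gPGL_{2}^{\times n}) \geq 2n$ coming from a suitable finite abelian $2$-subgroup.

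For the upper bound I would argue exactly as in the proof of Lemma \ref{quaternionichermitian}. Since $\ch(F) \neq 2$, every quaternion algebra over a field extension $K/F$ has the form $(a,b)$ with $a,b \in K^{\times}$, and $H^{1}(K, \gPGL_{2}^{\times n})$ is identified with the set of $n$-tuples of quaternion $K$-algebras. Hence the morphism $\gm^{2n} \to H^{1}(-, \gPGL_{2}^{\times n})$ defined by $(a_1, b_1, \ldots, a_n, b_n) \mapsto ((a_1, b_1), \ldots, (a_n, b_n))$ is surjective, and by (\ref{ctcssssssssss}) this gives $\ed(\gPGL_{2}^{\times n}) \leq \dim \gm^{2n} = 2n$, whence also $\ed_2(\gPGL_{2}^{\times n}) \leq 2n$.

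For the lower bound I would locate a finite elementary abelian $2$-subgroup with finite centralizer and invoke the Reichstein--Youssin bound (in its characteristic $\neq 2$ form). Inside $\PGL_{2}$, let $\bar s$ and $\bar t$ be the images of $\diag(1,-1)$ and $\left(\begin{smallmatrix} 0 & 1 \\ 1 & 0\end{smallmatrix}\right)$; these lifts anticommute in $\GL_2$, so $\bar s$ and $\bar t$ commute in $\PGL_{2}$ and generate a Klein four-group $V \cong (\Z/2)^{2}$. Forming the $n$-fold product gives a subgroup $A = V^{\times n} \cong (\Z/2)^{2n}$ of $\gPGL_{2}^{\times n}$. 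One checks that $V$ is self-centralizing in $\PGL_{2}$, so $C_{\gPGL_{2}^{\times n}}(A) = C_{\PGL_{2}}(V)^{\times n} = V^{\times n} = A$ is finite. The Reichstein--Youssin lower bound then yields $\ed_2(\gPGL_{2}^{\times n}) \geq \rank(A) = 2n$, completing the proof.

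The main obstacle is the verification that the centralizer $C_{\gPGL_{2}^{\times n}}(A)$ is finite, together with the correct invocation of the Reichstein--Youssin estimate when $\ch(F) \neq 2$ but $F$ is not algebraically closed. Concretely, an element of $\PGL_{2}$ centralizing $\bar s$ must normalize the diagonal torus, and the further requirement of commuting with the Weyl element $\bar t$ pins it down to $V$; since centralizers in a product are the products of centralizers, the centralizer of $A$ is finite, which is exactly the hypothesis needed. Should one prefer to avoid the subgroup method, an alternative is to send the generic tuple over $F(x_1,y_1,\ldots,x_n,y_n)$ to the length-$2n$ symbol $(x_1,y_1,\ldots,x_n,y_n) \in H^{2n}(-,\Z/2)$ via the cup product of Brauer classes and argue that this nonzero symbol cannot be defined over a subfield of transcendence degree $< 2n$; however, this requires delicate residue bookkeeping and control of prime-to-$2$ extensions, so the subgroup approach is the cleaner route.
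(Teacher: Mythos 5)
Your proof is correct, but the lower bound is obtained by a genuinely different route from the paper's. For the upper bound you and the paper prove the same thing in equivalent ways: the paper invokes the subadditivity $\ed(\gPGL_{2}^{\times n})\leq n\cdot\ed(\gPGL_{2})=2n$ from \cite[Lemma 1.11]{BerhuyFavi03}, while you make the underlying classifying variety $\gm^{2n}\to H^{1}(-,\gPGL_{2}^{\times n})$, $(a_i,b_i)\mapsto ((a_i,b_i))_i$, explicit; since $\ed(\gPGL_2)=2$ is itself proved by that symbol presentation, these coincide. For the lower bound the paper composes with the surjection $H^{1}(-,\gPGL_{2}^{\times n})\to \cat{Dec}_{2^{n}}(-)$ onto decomposable algebras and quotes $\ed_{2}(\cat{Dec}_{2^{n}})=2n$ from \cite{BM09} --- a result that ultimately rests on the nontriviality of the degree-$2n$ cohomological invariant given by cupping the $n$ quaternion symbols, i.e.\ essentially the ``alternative'' you sketch and then set aside. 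You instead use the Reichstein--Youssin method: the Klein four-group $V\subset\PGL_{2}$ generated by the images of $\diag(1,-1)$ and the Weyl element is indeed self-centralizing (your verification is right: commuting with $\bar s$ forces diagonal-or-antidiagonal, and commuting with $\bar t$ then forces the ratio of entries to be $\pm1$), so $A=V^{\times n}\cong(\Z/2)^{2n}$ has finite centralizer in $\gPGL_{2}^{\times n}$ and the $2$-local form of the Reichstein--Youssin bound (valid for $\ch(F)\neq 2$ by Gille--Reichstein; one first passes to $\overline{F}$, which only lowers $\ed_{2}$) gives $\ed_{2}\geq 2n$. Both arguments are standard and sound; the subgroup method is shorter and self-contained here, whereas the paper's reduction to $\cat{Dec}_{2^{n}}$ fits its broader strategy of bounding essential dimension of involution classes through the underlying algebra, as in Lemma \ref{forgetfulpgo}. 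The one point you should make explicit rather than merely flag is the reduction to the algebraically closed base field and the precise reference for the prime-to-the-characteristic, $p$-local version of the theorem.
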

\begin{proof}
By \cite[Lemma 1.11]{BerhuyFavi03}, we have
\[\ed_{2}(\gPGL_{2}^{\times n})\leq \ed(\gPGL_{2}^{\times n})\leq n\cdot \ed(\gPGL_{2})=2n.\]
On the other hand, the natural morphism
\begin{equation}\label{surdec}
H^{1}(-,\gPGL_{2}^{\times n})\to \cat{Dec}_{2^{n}}(-)
\end{equation}
is surjective, where $\cat{Dec}_{2^{n}}(K)$ is the set of all decomposable algebras of degree $2^{n}$ over a field extension $K/F$, hence, by (\ref{ctcssssssssss}), we have
\[2n=\ed_{2}(\cat{Dec}_{2^{n}})\leq \ed_{2}(\gPGL_{2}^{\times n}).\]

\end{proof}

\begin{lemma}\label{forgetfulpgo}
Let $F$ be a field of characteristic different from $2$. Then
\[\ed_{2}(\gPGO_{2^r}), \ed_{2}(\gPGSp_{2^r})\geq
\begin{cases}
2 &\text{ if }r=1,\\
4 &\text{ if }r=2,\\
(r-1)2^{r-1} &\text{ if }r\geq 3.\\
\end{cases}
\]
\end{lemma}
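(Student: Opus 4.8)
The plan is to bound both families of groups from below by the essential $2$-dimension of a single, purely algebra-theoretic functor, namely the functor $\Alg_{2^r,2}\colon \cat{Fields}/F\to\cat{Sets}$ of central simple $F$-algebras of degree $2^r$ and exponent dividing $2$, by forgetting the involution. The inclusions $\gPGO_{2^r}\hookrightarrow\gPGL_{2^r}$ and $\gPGSp_{2^r}\hookrightarrow\gPGL_{2^r}$ induce morphisms of functors
\[
H^1(-,\gPGO_{2^r})\to \Alg_{2^r,2},\qquad H^1(-,\gPGSp_{2^r})\to \Alg_{2^r,2},
\]
carrying the class of an algebra with quadratic pair $(A,\sigma,f)$, respectively of an algebra with symplectic involution $(A,\sigma)$, to the Brauer class of $A$; the image lands in $\Alg_{2^r,2}$ because any algebra admitting an involution of the first kind satisfies $2[A]=0$. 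If both morphisms are surjective, then (\ref{ctcssssssssss}) yields
\[
\ed_2(\Alg_{2^r,2})\le \ed_2(\gPGO_{2^r})\quad\text{and}\quad \ed_2(\Alg_{2^r,2})\le \ed_2(\gPGSp_{2^r}),
\]
so the whole problem reduces to a lower bound for $\ed_2(\Alg_{2^r,2})$.

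For the surjectivity I would use $\ch(F)\neq 2$ together with Albert's theorem: a central simple algebra $A$ of degree $2^r$ and exponent dividing $2$ admits an involution of the first kind. Since $\deg A$ is even, conjugating a given such involution by a symmetric, respectively skew-symmetric, unit preserves or reverses its type, so $A$ carries both a symplectic involution and an orthogonal involution; in characteristic different from $2$ the latter is the same datum as a quadratic pair. Hence every object of $\Alg_{2^r,2}$ lies in the image of each forgetful morphism, and both morphisms are surjective.

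It then remains to estimate $\ed_2(\Alg_{2^r,2})$. For $r=1$ and $r=2$ the functor degenerates: a degree-$2$ algebra is a quaternion algebra, and by Albert's theorem a degree-$4$ algebra of exponent dividing $2$ is a biquaternion algebra, so in both cases $\Alg_{2^r,2}=\cat{Dec}_{2^r}$. Lemma \ref{lempgl2}, via the surjection (\ref{surdec}), gives $\ed_2(\cat{Dec}_{2^r})=2r$, which is exactly $2$ for $r=1$ and $4$ for $r=2$. For $r\ge 3$ I would invoke the Baek--Merkurjev lower bound $\ed_2(\Alg_{2^r,2})\ge (r-1)2^{r-1}$. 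Combining either of these with the two displayed inequalities produces the asserted bounds for $\gPGO_{2^r}$ and $\gPGSp_{2^r}$ simultaneously.

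The main obstacle is precisely the input $\ed_2(\Alg_{2^r,2})\ge (r-1)2^{r-1}$ for $r\ge 3$. In contrast to the cases $r\le 2$, it cannot be reduced to decomposable algebras, since nondecomposable algebras of degree $8$ and exponent $2$ already exist, so the crude comparison with $\cat{Dec}_{2^r}$ (which only yields $2r$) is far too weak; the bound instead rests on the Karpenko--Merkurjev computation of essential dimension of finite $2$-groups together with canonical-dimension estimates for products of Severi--Brauer varieties. By comparison, the two steps I control directly---surjectivity of the forgetful morphisms and the identification $\Alg_{2^r,2}=\cat{Dec}_{2^r}$ for $r\le 2$---are routine.
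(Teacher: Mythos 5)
Your proposal is correct and follows essentially the same route as the paper: both reduce the problem to $\ed_2(\cat{Alg}_{2^r,2})$ via the forgetful morphisms, establish their surjectivity by Albert's theorem, and then quote the known values $2$, $4$ for $r=1,2$ and the Baek--Merkurjev bound $(r-1)2^{r-1}$ for $r\geq 3$. Your added justifications (existence of both involution types on an even-degree algebra of exponent $2$, and the identification $\cat{Alg}_{2^r,2}=\cat{Dec}_{2^r}$ for $r\leq 2$) merely spell out steps the paper leaves implicit.
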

\begin{proof}
Consider the forgetful functors
\begin{equation}\label{forge}
H^{1}(-,\gPGO_{2^r})\to \cat{Alg}_{2^r,2}
\end{equation}
and
\begin{equation}\label{forge2}
H^{1}(-,\gPGSp_{2^r})\to \cat{Alg}_{2^r,2},
\end{equation}
where $\cat{Alg}_{2^r,2}(K)$ is the set of isomorphism classes of simple algebras of degree $2^{n}$ and exponent dividing $2$ over a field extension $K/F$. These functors are surjective by a theorem of Albert.

It is well known that $\ed_{2}(\cat{Alg}_{2,2})=2$, $\ed_{2}(\cat{Alg}_{4,2})=4$. For $r\geq 3$, we have $\ed_{2}(\cat{Alg}_{2^r,2})\geq (r-1)2^{r-1}$  by \cite[Theorem]{BM10}. Therefore, by (\ref{ctcssssssssss}), we have the above lower bound for $\ed_{2}(\gPGO_{2^r})$ and $\ed_{2}(\gPGSp_{2^r})$.
\end{proof}

\smallskip

The following Lemma \ref{symplecticdecomp}(1) was proved by Rowen in \cite[Theorem B]{Rowcsa} (see also \cite[Proposition 16.16]{Book}) and Lemma \ref{symplecticdecomp}(2) was proved by Serhir and Tignol in \cite[Proposition]{STignol}. We shall need the explicit forms of involutions on the decomposed quaternions as in (1):

\begin{lemma}\label{symplecticdecomp}
Let $F$ be a field of characteristic different from $2$. Let $(A,\sigma)$ be a central simple $F$-algebra of degree $4$ with a symplectic involution $\sigma$.
\begin{enumerate}
\item If $A$ is a division algebra, then we have \[(A,\sigma)\simeq (Q,\sigma|_{Q})\tens (Q',\gamma),\]
where $\sigma|_{Q}$ is an orthogonal involution defined by $\sigma|_{Q}(x_{0}+x_{1}i+x_{2}j+x_{3}k)=x_{0}+x_{1}i+x_{2}j-x_{3}k$ with a quaternion basis $(1,i,j,k)$ for $Q$ and $\gamma$ is the canonical involution on a quaternion algebra $Q'$.
\item If $A$ is not a division algebra, then we have
\[(A,\sigma)\simeq (M_{2}(F),\ad_{q})\tens (Q',\gamma),\]
where $q$ is a $2$-dimensional quadratic form, $\ad_{q}$ is the adjoint involution on $M_{2}(F)$, and $\gamma$ is the canonical involution on a quaternion algebra $Q'$.
\end{enumerate}
\end{lemma}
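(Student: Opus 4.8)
The plan is to realize the decomposition by producing a $\sigma$-stable quaternion subalgebra $Q\subseteq A$ on which $\sigma$ restricts to an orthogonal involution of the asserted shape; the double centralizer theorem then supplies $Q'=C_{A}(Q)$, a quaternion algebra with $A\simeq Q\tens Q'$ and $\sigma=\sigma|_{Q}\tens\sigma|_{Q'}$. A dimension count pins down the type of $\sigma|_{Q'}$: since $\dim\Sym(A,\sigma)=6$ and $\dim\Sym(Q,\sigma|_{Q})=3$, writing $s'=\dim\Sym(Q',\sigma|_{Q'})$ gives $6=3s'+(4-s')$, so $s'=1$ and $\sigma|_{Q'}$ is symplectic, hence equal to the canonical involution $\gamma$. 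Everything therefore reduces to finding two anticommuting symmetric elements whose squares lie in $F^{\times}$.

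The tool I would use is a quadratic form on trace-zero symmetric elements. Let $\Sym^{0}=\{x\in\Sym(A,\sigma):\operatorname{Trd}(x)=0\}$, a $5$-dimensional space since $\operatorname{Trd}(1)=4\neq 0$. For a symplectic involution in degree $4$ every symmetric $x$ satisfies its Pfaffian characteristic polynomial $x^{2}-\operatorname{Trp}(x)\,x+\operatorname{Nrp}(x)=0$, and $\operatorname{Trd}=2\operatorname{Trp}$ on symmetric elements; hence $x\in\Sym^{0}$ forces $x^{2}=-\operatorname{Nrp}(x)\in F$. Thus $q_{\sigma}(x):=x^{2}$ is a quadratic form $\Sym^{0}\to F$, with polar form $b(x,y)=xy+yx\in F$; in particular $x$ and $y$ anticommute exactly when they are orthogonal for $q_{\sigma}$. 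Diagonalizing, I choose orthogonal anisotropic $e_{1},e_{2}\in\Sym^{0}$ with $e_{1}^{2}=a$, $e_{2}^{2}=b\in F^{\times}$; then $Q=F\langle e_{1},e_{2}\rangle\simeq(a,b)$ is a $\sigma$-stable quaternion subalgebra. Since $\sigma(e_{1})=e_{1}$, $\sigma(e_{2})=e_{2}$ and $\sigma(e_{1}e_{2})=e_{2}e_{1}=-e_{1}e_{2}$, the restriction $\sigma|_{Q}$ is precisely the orthogonal involution displayed in $(1)$.

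For $(1)$, where $A$ is a division algebra, the subalgebras $Q$ and $Q'=C_{A}(Q)$ are automatically division, so the previous paragraph already yields the stated decomposition. For $(2)$ the extra demand is that the orthogonal factor be the split algebra $(M_{2}(F),\ad_{q})$, i.e.\ that $(a,b)\simeq M_{2}(F)$; equivalently I must be able to choose $e_{1},e_{2}$ spanning a hyperbolic plane for $q_{\sigma}$ (take $e_{1}^{2}=1$, $e_{2}^{2}=-1$). This is possible exactly when $q_{\sigma}$ is isotropic, that is, when some nonzero $x\in\Sym^{0}$ satisfies $x^{2}=0$. If $A$ is division no such $x$ exists and $q_{\sigma}$ is anisotropic; the substance of $(2)$ is the converse statement, that a non-division degree-$4$ algebra with symplectic involution carries a symmetric, trace-zero, square-zero element. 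Granting this, a hyperbolic plane gives $Q\simeq M_{2}(F)$ and $\sigma|_{Q}=\ad_{q}$ for the binary form $q=\langle e_{1}^{2},e_{2}^{2}\rangle$, completing $(2)$.

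The hard part is exactly this last step: producing an isotropic vector of $q_{\sigma}$ (equivalently, a symmetric, trace-zero, nilpotent element with vanishing square) when $A$ is not division, which is the core of the Serhir--Tignol refinement and does not follow from the mere existence of nilpotents in $A$. A secondary technical point, needed throughout, is the verification that $x\mapsto x^{2}$ is genuinely $F$-valued and quadratic on $\Sym^{0}$, for which I would invoke the Pfaffian characteristic polynomial of symplectic involutions in degree $4$.
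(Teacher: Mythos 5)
Your argument for part (1) is correct but follows a genuinely different route from the paper's. The paper produces the $\sigma$-stable quaternion subalgebra $Q$ by quoting Rowen's Propositions 5.3 and 5.4 (a symmetric $i$ with $[F(i):F]=2$, then a symmetric $j$ conjugating $i$ to $\phi(i)$) and identifies $\sigma|_{Q'}$ with $\gamma$ via \cite[Proposition 2.23]{Book}; you instead diagonalize the quadratic form $q_{\sigma}(x)=x^{2}=-\operatorname{Nrp}_{\sigma}(x)$ on the $5$-dimensional space of trace-zero symmetric elements and determine the type of $\sigma|_{Q'}$ by the dimension count $6=3s'+(4-s')$. This is essentially the mechanism behind \cite[Proposition 16.16]{Book}, which the paper itself offers as an alternative source for (1), and it buys something the paper's element-by-element construction does not: the splitting of the orthogonal factor $(a,b)$ becomes visible as isotropy of $q_{\sigma}$, which is precisely what part (2) requires. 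Your reduction of (2) to the claim that $q_{\sigma}$ is isotropic whenever $A$ is not division is sound --- the form is nonsingular (it is the orthogonal complement of $F\cdot 1$ inside the nonsingular form $\operatorname{Nrp}_{\sigma}$ on $\Sym(A,\sigma)$), so isotropy yields a hyperbolic plane and hence $e_{1}^{2}=1$, $e_{2}^{2}=-1$ and $Q\simeq M_{2}(F)$. You correctly flag that this isotropy claim is the one step you do not prove; note that the paper's own proof of (2) consists of the single line ``See \cite[Proposition]{STignol}'', so your proposal is no less complete than the printed argument and is arguably more informative about where the difficulty sits. If you wished to close that last step yourself, the cleanest route is the classical fact that a nonsingular $5$-dimensional quadratic form is anisotropic if and only if its even Clifford algebra is division, combined with the identification of $A$ with the even Clifford algebra of $(\Sym^{0},q_{\sigma})$ coming from the exceptional isomorphism $\gPGSp_{4}\simeq\gO^{+}_{5}$.
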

\begin{proof}
(1) By \cite[Proposition 5.3]{Rowcsa}, we can choose a $i\in A\backslash F$ such that $\sigma(i)=i$ and $[F(i):F]=2$. Let $\phi$ be the nontrivial automorphism of $F(i)$ over $F$. By \cite[Proposition 5.4]{Rowcsa}, there is a $j\in A\backslash F$ such that $\sigma(j)=j$ and $ji=\phi(i)j$. Then $i$ and $j$ generate a quaternion algebra $Q$, $\sigma|_{Q}(i)=i$, $\sigma|_{Q}(j)=j$, and $\sigma|_{Q}(k)=-k$ with $k=ij$. Hence, $\sigma|_{Q}$ is an orthogonal involution on $Q$. By the double centralizer theorem, we have $A\simeq Q\tens C_{A}(Q)$, where $C_{A}(Q)$ is the centralizer of $Q\subset A$ and is isomorphic to quaternion algebra $Q'$ over $F$. By \cite[Proposition 2.23]{Book}, the restriction of $\sigma$ on $Q'$ is the canonical involution $\gamma$.

(2) See \cite[Proposition]{STignol}.
\end{proof}

\smallskip

\begin{proposition}\label{oopo}
Let $F$ be a field of characteristic different from $2$. Then
\begin{enumerate}
\item $\ed_{2}(\gPGO^{+}_{4})=\ed(\gPGO^{+}_{4})=4$.
\smallskip
\item $\ed_{2}(\gPGO_{4})=\ed(\gPGO_{4})=4$.
\smallskip
\item $\ed_{2}(\gPGSp_{4})=\ed(\gPGSp_{4})=4$.
\end{enumerate}
\end{proposition}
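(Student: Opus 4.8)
The plan is to pinch, in each of the three cases, the chain $\ed_{2}(G)\le\ed(G)\le 4$ against a lower bound $\ge 4$; since the extreme terms agree, every quantity equals $4$. The lower bounds are essentially in hand: Lemma \ref{forgetfulpgo} with $r=2$ gives $\ed_{2}(\gPGO_{4})\ge 4$ and $\ed_{2}(\gPGSp_{4})\ge 4$, while for $\gPGO^{+}_{4}$ I will instead use the exceptional isomorphism below. For each upper bound I will exhibit a $4$-dimensional classifying variety, i.e. a $4$-dimensional $X$ together with a surjection $X(K)\to\cT(K)$ onto the relevant functor $\cT$ for every $K/F$, and then invoke (\ref{ctcssssssssss}) to conclude $\ed(\cT)\le 4$.

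For $(1)$ I reduce to the group $\gPGL_{2}^{\times 2}$. By the exceptional isomorphism $\gPGO^{+}_{4}\cong\gPGL_{2}\times\gPGL_{2}$ of $F$-groups, equivalently by the equivalence (\ref{DATrivial}) identifying $^{1}D_{2}$ with the groupoid $^{1}A_{1}^{2}$ of pairs $(Q,Q')$ of quaternion $F$-algebras, the functor $H^{1}(-,\gPGO^{+}_{4})$ is classified by $\gPGL_{2}^{\times 2}$. Hence $\ed_{2}(\gPGO^{+}_{4})=\ed_{2}(\gPGL_{2}^{\times 2})$ and $\ed(\gPGO^{+}_{4})=\ed(\gPGL_{2}^{\times 2})$, and Lemma \ref{lempgl2} gives the common value $4$ for both, settling $(1)$ at once.

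For $(3)$ I use Lemma \ref{symplecticdecomp}. Define $\gm^{4}\to C_{2}$ by sending $(a,b,c,d)$ to $((a,b),\sigma^{\mathrm{std}})\tens((c,d),\gamma)$, where $\sigma^{\mathrm{std}}$ is the orthogonal involution $x_{0}+x_{1}i+x_{2}j+x_{3}k\mapsto x_{0}+x_{1}i+x_{2}j-x_{3}k$ of Lemma \ref{symplecticdecomp}(1) and $\gamma$ is the canonical involution. This is surjective onto $C_{2}(K)$: when $A$ is division it is exactly Lemma \ref{symplecticdecomp}(1), writing $Q=(a,b)$ through $a=i^{2}$, $b=j^{2}$; when $A$ is split, letting $(a,b)$ range over split quaternions realizes $(M_{2}(K),\ad_{q})\tens((c,d),\gamma)$ as in Lemma \ref{symplecticdecomp}(2). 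Therefore $\ed(\gPGSp_{4})=\ed(C_{2})\le 4$, which with Lemma \ref{forgetfulpgo} forces equality throughout.

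For $(2)$ the guiding idea is identical, but the decomposition I need is not quoted ready-made. The input I must establish is the structure statement that every central simple $F$-algebra of degree $4$ with orthogonal involution $(A,\sigma)$ is decomposable, $(A,\sigma)\cong(Q_{1},\rho_{1})\tens(Q_{2},\rho_{2})$, with the pair of types being either (orthogonal, orthogonal) or (symplectic, symplectic), the latter being exactly the trivial-discriminant case already handled by the pairs-of-quaternions family of $(1)$. Granting this, I normalize each orthogonal factor by choosing a quaternion basis adapted to $\rho_{i}$, so that $(Q_{i},\rho_{i})\cong((a_{i},b_{i}),\sigma^{\mathrm{std}})$; then the two $4$-dimensional families $(a,b,c,d)\mapsto((a,b),\sigma^{\mathrm{std}})\tens((c,d),\sigma^{\mathrm{std}})$ and $(a,b,c,d)\mapsto((a,b),\gamma)\tens((c,d),\gamma)$ together surject onto $D_{2}$, a discriminant computation ($\disc=abcd$ in the first family) confirming that every value of $\disc(\sigma)$ is attained. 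This yields $\ed(\gPGO_{4})=\ed(D_{2})\le 4$, matched by Lemma \ref{forgetfulpgo}. The main obstacle is precisely this decomposability: unlike the symplectic case it is not supplied by Lemma \ref{symplecticdecomp}, so I must either produce the $F$-rational tensor decomposition directly or translate the equivalence $D_{2}\equiv A_{1}^{2}$ of (\ref{DA}) into such a decomposition, and then check that standardizing each factor to $\sigma^{\mathrm{std}}$ keeps the parameter count at $4$; a secondary check, in $(3)$, is that split values of $(a,b)$ realize every similarity class of binary form $q$, so that the single map $\gm^{4}\to C_{2}$ is genuinely surjective rather than merely dominant.
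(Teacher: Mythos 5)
Your parts (1) and (3) follow the paper's own argument: (1) is exactly the reduction to $\gPGL_{2}^{\times 2}$ via (\ref{DATrivial}) and Lemma \ref{lempgl2}, and (3) is the paper's map $\gm^{4}\to C_{2}$ built from Lemma \ref{symplecticdecomp} (the paper handles the split case by writing the binary form as $\langle 1,y\rangle$ rather than by letting $(a,b)$ run over split quaternion presentations, but that is a cosmetic difference, and your ``secondary check'' does go through).

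Part (2), however, rests on a statement that is false, and you have correctly identified it as the crux without realizing it cannot be repaired in the form you want. A degree $4$ central simple algebra with orthogonal involution $(A,\sigma)$ admits a decomposition into $\sigma$-stable quaternion subalgebras \emph{if and only if} $\disc(\sigma)$ is trivial (Knus--Parimala--Sridharan; see \cite[(15.12)]{Book}). Indeed, by the discriminant formula for tensor products \cite[(7.3)]{Book}, if $\sigma_{1},\sigma_{2}$ are orthogonal involutions on quaternion algebras then $\disc(\sigma_{1}\tens\sigma_{2})=\disc(\sigma_{1})^{2}\disc(\sigma_{2})^{2}=1$, and likewise $\gamma\tens\gamma$ has trivial discriminant; your claimed computation $\disc=abcd$ for the family $((a,b),\sigma^{\mathrm{std}})\tens((c,d),\sigma^{\mathrm{std}})$ is wrong (one can check it directly on $\ad_{q_{1}}\tens\ad_{q_{2}}=\ad_{q_{1}\tens q_{2}}$: the determinant of $q_{1}\tens q_{2}$ is a square). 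So both of your $4$-parameter families land inside $^{1}D_{2}$ and miss every class with nontrivial discriminant, which is the generic case. The paper's actual route avoids decomposability entirely: it uses the equivalence $D_{2}\equiv A_{1}^{2}$ of (\ref{DA}), i.e. it classifies quaternion algebras $Q$ over the \'etale quadratic discriminant extension $F(\sqrt{e})$, and exhibits the $4$-dimensional variety $\{(a,b,c,e): e(a^{2}-b^{2}e)(c^{2}-e)\neq 0\}$ mapping onto $A_{1}^{2}$ via $(a,b,c,e)\mapsto (a+b\sqrt{e},\,c+\sqrt{e})$, the point being that one of the two slopes in the second slot can be normalized to $1$ by rescaling $e$. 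To fix your argument you would need to replace your decomposition step by this (or an equivalent) parametrization over the quadratic discriminant algebra.
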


\begin{proof}
(1) By the exceptional isomorphism (\ref{DATrivial}), we have
\[\gPGO^{+}_{4}=\gPGL_{2}\times \gPGL_{2}.\]
The proof follows from Lemma \ref{lempgl2} with $n=2$.

\smallskip

(2) By Lemma \ref{forgetfulpgo}, we have $\ed_{2}(\gPGO_{4})\geq 4$. For the opposite inequality, we consider the affine variety $X$ defined in $\A^{4}_{F}$ with the coordinates $(a,b,c,e)$ by $e(a^{2}-b^{2}e)(c^{2}-e)\neq 0$. Define a morphism $X\to \!A_{1}^{2}$ by
\[(a,b,c,e)\mapsto \begin{cases}
(a+b\sqrt{e}, c+\sqrt{e}) & \text{if $F(\sqrt{e})$ is a quadratic field extension},\\
(a,b)\times (c,\sqrt{e}) & \text{otherwise},
\end{cases}
\]

We show that $X$ is a classifying variety for $\!A_{1}^{2}$. Let $Q=(a+b\sqrt{e},c+d\sqrt{e})$ be a quaternion algebra over a quadratic extension $L=F(\sqrt{e})$. If $b=d=0$, we can modify $c$ by a norm of $L(\sqrt{a})/L$, hence we may assume that $d\neq 0$. Similarly, we can assume that $d=1$, replacing $e$ by $ed^{2}$. Thus, the morphism $X\to \!A_{1}^{2}$ is surjective. By (\ref{ctcssssssssss}), $\ed(\!A_{1}^{2})\leq 4$. Hence, the opposite inequality $\ed(\gPGO_{4})\leq 4$ comes from the exceptional isomorphism (\ref{DA}) and the canonical bijection (\ref{DAGalois}).

\smallskip

(3) By Lemma \ref{forgetfulpgo}, we have $\ed_{2}(\gPGSp_{4})\geq 4$. For the opposite inequality, we define a morphism $\gm^{4}\to C_{2}$ by
\[(x,y,z,w)\mapsto \begin{cases}
((x,y),\sigma)\tens ((z,w),\gamma) & \text{if $x\neq 1$},\\
(M_{2}(F),\ad_{q})\tens ((z,w),\gamma) & \text{if $x=1$},
\end{cases}
\]
where $\sigma$ is an involution defined by $\sigma(x_{0}+x_{1}i+x_{2}j+x_{3}k)=x_{0}+x_{1}i+x_{2}j-x_{3}k$ with a quaternion basis $(1,i,j,k)$ of the quaternion algebra $(x,y)$, $q=\langle 1,y\rangle$ is a quadratic form, and $\gamma$ is the canonical involution on the quaternion algebra $(z,w)$. Note that multiplying any two dimensional quadratic form by a scalar does not change the adjoint involution. By Lemma \ref{symplecticdecomp}, this morphism is surjective. Therefore, by (\ref{ctcssssssssss}), we have $\ed(C_{2})\leq 4$, hence the result follows from the canonical bijection (\ref{CAGalois}).

\end{proof}

\begin{remark}
\hfill
\begin{enumerate}
\item Assume that $F$ is a field of characteristic $2$. By \cite[Corollary 2.2]{Baek}, we have $\ed_{2}(\cat{Alg}_{4,2})=\ed_{2}(\cat{Dec}_{4})\geq 3$. As the morphisms (\ref{surdec}) and (\ref{forge}) are surjective, we get $\ed_{2}(\gPGO^{+}_{4})\geq 3$ and $\ed_{2}(\gPGO_{4})\geq 3$, respectively. On the other hand, the upper bounds in Proposition \ref{oopo} (1) and (2) still hold, hence $3\leq \ed(\gPGO^{+}_{4}),  \ed(\gPGO_{4})\leq 4$.
\item As $\ed(\gO^{+}_{5})=4$ by \cite[Theorem 10.3]{Re00}, Proposition \ref{oopo} (3) can be recovered from the exceptional isomorphism $\gPGSp_{4}\simeq \gO^{+}_{5}$.
\end{enumerate}
\end{remark}


\begin{thebibliography}{10}

\bibitem{Baek}
S. Baek, \emph{Essential dimension of simple algebras in positive characteristic}, C. R. Math. Acad. Sci. Paris \textbf{349} (2011) 375--378.

\bibitem{BM09}
S. Baek and A. Merkurjev, \emph{Invariants of simple algebras},
  Manuscripta Math. \textbf{129} (2009), no. 4, 409--421.

\bibitem{BM10}
S.~Baek and A.~Merkurjev, \emph{Essential dimension of central simple algebras}, to appear in Acta Math.

\bibitem{BerhuyFavi03}
G.~Berhuy and G.~Favi, \emph{Essential dimension: a functorial
  point of view $($after {A}. {M}erkurjev$)$}, Doc. Math. \textbf{8} (2003),
  279--330 (electronic).

\bibitem{Book}
M.-A.~Knus, A.~Merkurjev, M.~Rost, and J.-P. Tignol, \emph{The book
of involutions}, American Mathematical Society, Providence, RI, 1998, With a
  preface in French by J.\ Tits.

\bibitem{CS}
V.~Chernousov and J.-P.~Serre, \emph{Lower bounds for essential dimensions via orthogonal representations},
  J. Algebra \textbf{305} (2006), no. 2, 1055--1070.

\bibitem{Mac}
M.~L.~MacDonald, \emph{Cohomological invariants of odd degree {J}ordan algebras},
  Math. Proc. Cambridge Philos. Soc. \textbf{145} (2008), no. 2, 295--303.


\bibitem{Merkurjev09}
A.~S. Merkurjev, \emph{Essential dimension}, Quadratic
forms---algebra,
  arithmetic, and geometry, Contemp. Math., vol. 493, Amer. Math. Soc.,
  Providence, RI, (2009), pp.~299--325.

\bibitem{Re00}
Z.~Reichstein, \emph{On the notion of essential dimension for algebraic groups}, Transform. Groups 5 (2000), no. 3, 265–-304.

\bibitem{Rowcsa}
L.~Rowen, \emph{Central simple algebras}, Israel J. Math.
\textbf{29}
  (1978), no.~2-3, 285--301.

\bibitem{STignol}
A.~Serhir and J.-P. Tignol, \emph{The discriminant of a decomposable symplectic involution}, J. Algebra \textbf{273} (2004), no. 2, 601--607.

\end{thebibliography}
\end{document}